\newcommand{\Ind}{\mathrm{Ind}}
\newcommand{\Inf}{\mathrm{Inf}}
\newcommand{\GST}{\mathrm{GST}}
\newcommand{\bx}{\mathbf{x}}
\newcommand{\by}{\mathbf{y}}
\newtheorem{lemma}{\sc Lemma}[section]
\newcommand{\RR}{{\rm I\kern -1.6pt{\rm R}}}
\title[The Generalized Symmetric Tequila Problem]{The Generalized Symmetric Tequila Problem: Influence and Independence in N-Player Games}
\author[]{}
\email{}
\begin{document}
\maketitle

\centerline{\scshape   Denali Molitor}
\begin{abstract}
This paper extends results from Mike Steel and Amelia Taylor's paper \emph{The Structure of Symmetric N-Player Games when Influence and Independence Collide}. 
These games include $n$ causes, which are dichotomous random variables whose values determine the probabilities of the values of $n$ dichotomous effects. 
We denote the probability spaces that exhibit independence and influence among $n$ players as $\Ind_n$ and $\Inf_n$ respectively. 
We define the solution space of the `generalized symmetric tequila problem,' $\GST_n$, as the set of probabilities for a set of given effects such that the causes and effects are independent and each cause influences the effects, i.e.
\begin{equation}
\GST_n=\Ind_n\cap \Inf_n.
\end{equation}
Steel and Taylor showed that $\GST_n$ is connected for $n\geq 8$ and disconnected for $n=3,4$. We prove that for $n=5,6,7$, $\GST_n$ is connected and determine the number of connected components of $\GST_4$.

\end{abstract}
 
\section{Introduction}
In this paper we examine the intersection of independence and influence in $n$-player games, specifically addressing open questions from \cite{taylor} and providing additional examples of scenarios for which this model may be useful. In these games, we consider $n$ causes, dichotomous random variables, whose values determine the probabilities of the values of $n$ dichotomous effects. Independence is characterized by the requirement that $\mathbb{P}(E_i\wedge E_j|C_k)=\mathbb{P}(E_i|C_k)\mathbb{P}(E_j|C_k)$. 
We say that a set of $k$ causes influences a set of $m$ effects if for each pair of a cause $C_i$ and an effect $E_j$, there
exists at least one assignment of states for the remaining $k-1$ causes, such that some change
in the state of $C_i$, while holding the values of the remaining $k-1$ causes fixed, changes the
probability of the effect $E_j$ \cite{taylor} \cite{sober}. 
As in \cite{taylor}, we consider the case in which $\mathbb{P}(C_i)= \frac{1}{2}$ for all $i$.
We denote the spaces that exhibit independence and influence among $n$ players as $\Ind_n$ and $\Inf_n$ respectively. We define the solution space of the `generalized symmetric tequila problem,' $\GST_n$, as 
\begin{equation}
\GST_n=\Ind_n\cap \Inf_n.
\end{equation}
In \cite{taylor}, Steel and Taylor show that for $n\geq 8$, $\GST_n$ is connected, while $\GST_3$ consists of two connected components and $\GST_4$ is disconnected. In this paper, we argue that $\GST_n$ is connected for $n=5,6,7$ and show that $\GST_4$ consists of two connected components. Due to the relationship with \cite{taylor}, we rely heavily on definitions and theorems proved by Steel and Taylor.

\section{Background Information}
In order to study the connectedness of $\GST_n$ we first characterize the sets $\Ind_n$ and $\Inf_n$. Proposition 3.2 of \cite{taylor} states that a point $\bx\in\Ind_n$ if and only if
\begin{equation}
\Psi(\bx)=\left(\frac{1}{2^{n-1}}\sum_{k=0}^{n-1}{n-1 \choose k}x_{k+1}\right)^2-\frac{1}{2^{n-1}}\sum_{k=0}^{n-2}{n-2 \choose k}(x_{k+2}^2+x_{k+1}x_{n-(k+1)})=0,
\end{equation}
which expresses the property that
\begin{equation}
P(E_i\wedge E_j | C_k)-P(E_i|C_k)P(E_j|C_k)=0.
\end{equation}
By Proposition 3.1 of \cite{taylor}, $\bx\in \Inf_n$ if and only if there exists $s\in [n]$ such that $x_s\neq x_{n-s+1}$. Since $\GST_n$ can be equivalently defined as $\Ind_n- \Inf_n^c$, it is useful to consider the complement of $\Inf_n$, 
\begin{equation}
\Inf_n^c=\{\bx\in\mathbb{R}|x_i=x_{n-i+1}, \, \forall i\in [n]\},
\end{equation}
in determining the connectedness of $\GST_n$.

Steel and Taylor \cite{taylor} also showed that if $Q_n$ is the matrix corresponding to the quadratic form $\Psi$, then there exists a diagonal matrix $D$ of real eigenvalues of $Q$, and a real orthogonal matrix $P$ such that $P^TQ_nP=D$. If $\by=P^T\bx$, then 
 solutions to $\by^TD\by=0
$ are of the form
\begin{equation}
S_{s,t}=\left\{\by\in\mathbb{R}^n\bigg|-s<y_1<s,\sum_{i=2}^k\lambda_i y_i^2=t \text{ and } \sum_{i=n-k+1}^n (-\lambda_i) y_i^2=t \right\},
\end{equation}
for some $s$ and $t$, where $k-1$ is the number of positive eigenvalues and $n-k-1$ is the number of negative eigenvalues. Note that $t\geq 0$ and $s > 0$, for if $s\leq 0$, then $S_{s,t}$ is empty. We therefore assume $t\geq 0$ and $s>0$ throughout the remainder of the paper.
Also, as in \cite{taylor}, we define $T$ to be the set of all $\by$ such that the corresponding $\bx\in\Inf_n^c$, that is
\begin{equation}
T=\left\{\by=P^T\bx|\bx\in\Inf_n^c\right\}.
\end{equation}
Applying $P^T$ to $\Ind_n$ and $\Inf_n^c$ gives the spaces $S_{s,t}$ and $T$ respectively, within which studying connectedness is easier. The fact that $P^T$ is a homeomorphism follows directly from the fact that $P$ is orthogonal. We use this fact repeatedly and so state it as a Lemma. 

\begin{lemma} The map $P^T$ is a homeomorphism.
\end{lemma}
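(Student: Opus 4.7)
The plan is to unpack the definition of a homeomorphism and verify each requirement using only the orthogonality of $P$ together with standard facts about linear maps on $\mathbb{R}^n$. Recall that a homeomorphism is a continuous bijection whose inverse is continuous, so I would need to establish three things: that $P^T$ is a bijection $\mathbb{R}^n\to\mathbb{R}^n$, that it is continuous, and that its inverse is continuous.

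First I would exploit the orthogonality of $P$ to produce an explicit two-sided inverse. By definition $PP^T=P^TP=I$, so $P^T$ has inverse $(P^T)^{-1}=P$, which in particular shows $P^T$ is a bijection on $\mathbb{R}^n$. Next, I would observe that any linear map on a finite-dimensional real vector space is automatically continuous with respect to the Euclidean topology; concretely, one can bound $\|P^T\bx-P^T\bx'\|\leq \|P^T\|\cdot\|\bx-\bx'\|$, so $P^T$ is (Lipschitz) continuous. Since $P$ is also linear, the same argument gives continuity of the inverse.

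Combining these three points, $P^T$ is a continuous bijection with continuous inverse, hence a homeomorphism, which is the conclusion of the lemma. There is no genuine obstacle here: the only subtlety worth mentioning is that orthogonality is being used twice, once to guarantee the inverse exists as a matrix (so the map is bijective on all of $\mathbb{R}^n$, not merely injective), and once implicitly through the fact that the inverse is itself a linear map and therefore continuous. Everything else is standard finite-dimensional linear algebra, so the proof should be a short paragraph.
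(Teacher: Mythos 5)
Your proof is correct and follows exactly the route the paper intends: the paper simply remarks that the lemma ``follows directly from the fact that $P$ is orthogonal,'' and your argument (orthogonality gives the explicit inverse $P$, and linearity in finite dimensions gives continuity of both the map and its inverse) is precisely the standard justification being invoked. No issues.
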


Since $P^T$ is a homeomorphism, from $\Ind_n\cap\Inf_n$ to  $(\cup S_{s,t\geq0})-T$, these two spaces must have the same number of connected components. The structures of $S_{s,t\geq0},$ and $T\cap  S_{s,t\geq0}$ are simpler to study and provide clues as to the structure of $(\cup S_{s,t\geq0})-T$. We examine the connectedness of $\GST_n$ by considering the connectedness of the $S_{s,t\geq0},$ $T\cap S_{s,t\geq0}$ and finally $(\cup S_{s,t\geq0})-T$.\\

In the following sections, we give a case-by-case study of the connectedness of $\GST_n$ for $n=4,5,6,7$. Our first step in each case is to compute the dimension of $\Ind_n\cap \Inf_n^c$ and apply $P^T$ to this resulting space. We then examine $S_{s,t\geq0}\cap T$ in order to determine whether $S_{s,t\geq0}-T$ is connected. 
Showing that each of the $S_{s,t\geq0}-T$ are connected for $t>0$, is in fact, enough to show that $(\cup S_{s,t\geq0})-T$ is connected.

\begin{lemma}If $S_{s,t>0}-T$ is connected for all $s > 0$ and $t>0$, then for $n\geq 3$, $\GST_n$ is connected also.
\end{lemma}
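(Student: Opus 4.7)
Since $P^T$ is a homeomorphism, it suffices to show that $W := \left(\bigcup_{s>0,\,t\geq 0} S_{s,t}\right) - T$ is connected. Observe that taking the union over all $s > 0$ removes the constraint $|y_1| < s$, so $W = V - T$, where
\[
V = \left\{\by \in \mathbb{R}^n : A(\by) = B(\by)\right\},
\]
with $A(\by) = \sum_{i=2}^k \lambda_i y_i^2$ and $B(\by) = \sum_{i=n-k+1}^n (-\lambda_i) y_i^2$ both non-negative quadratic forms. The common value $t = A(\by) = B(\by) \geq 0$ is the $t$-parameter. My plan is to decompose $W = W_+ \cup W_0$, where $W_+ := W \cap \{A > 0\}$ and $W_0 := W \cap \{A = 0\}$, and then prove (i) $W_+$ is connected and (ii) $W_0 \subseteq \overline{W_+}$. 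Since $W_+ \subseteq W \subseteq \overline{W_+}$, the set $W$ will then be connected by the standard fact that any set squeezed between a connected set and its closure is connected.

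For (i) I would argue in two stages. First, for each fixed $t > 0$, the family $\{S_{s,t} - T\}_{s > 0}$ is nested in $s$ and each member is connected by hypothesis, so the union $(V \cap \{A = t\}) - T$ is connected. Second, to join different $t$-slices, given $\by \in W_+$ with $t = A(\by) > 0$ and any target $t' > 0$, I would use the radial path $\tau \mapsto ((1-\tau) + \tau \sqrt{t'/t})\by$ for $\tau \in [0,1]$. This path remains in $V$ because $V$ is a cone, and stays out of $T$ because $T$ is a linear subspace not containing $\by$ while the scaling factor remains strictly positive throughout. Thus any two points of $W_+$ can be radially scaled into a common slice and then joined within that connected slice.

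For (ii) I would use a perturbation. A point $\by \in W_0$ has $y_i = 0$ for every $i \in \{2,\ldots,k\} \cup \{n-k+1,\ldots,n\}$. I would choose a vector $\mathbf{z}$ supported on exactly these coordinates with $A(\mathbf{z}) = B(\mathbf{z}) > 0$, and set $\by_\epsilon = \by + \epsilon \mathbf{z}$. The vanishing cross terms then yield $A(\by_\epsilon) = \epsilon^2 A(\mathbf{z}) = B(\by_\epsilon) > 0$, placing $\by_\epsilon \in V \cap \{A > 0\}$. Because $T$ is closed and $\by \notin T$, we have $\by_\epsilon \notin T$ for all sufficiently small $\epsilon > 0$, so $\by_\epsilon \in W_+$ and $\by_\epsilon \to \by$ as $\epsilon \to 0$.

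The step I expect to require the most attention is (ii), since constructing the witness $\mathbf{z}$ tacitly uses the fact that $Q_n$ has at least one positive and at least one negative eigenvalue; this must be read off from the eigenvalue data for $Q_n$, but is consistent with the structure described in the paper for $n \geq 3$. The scaling argument in (i) is elementary once one notes the cone structure of $V$ and the linearity of $T$, and the handling of $t = 0$ via closure avoids any need to find paths through $W_0$ directly.
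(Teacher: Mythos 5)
There is a genuine gap at the very first step. Your reduction ``it suffices to show that $W=\bigl(\bigcup_{s>0,\,t\geq 0}S_{s,t}\bigr)-T$ is connected'' is not valid: $\GST_n=\Ind_n\cap\Inf_n$ is by definition a set of \emph{probabilities}, i.e.\ a subset of $[0,1]^n$, so $P^T$ carries $\GST_n$ homeomorphically onto $W\cap P^T([0,1]^n)$, not onto all of $W$. Connectedness of $W$ in $\mathbb{R}^n$ does not pass to its intersection with (the image of) the cube---a connected set can easily be disconnected by intersecting it with a convex body---so your argument establishes connectedness of the wrong set. This is precisely the difficulty the paper's own proof is organized around: it rescales the two given points of $\GST_n$ into a single slice $S_{s,t}$ with $t$ and $t'$ chosen so small that the connecting path lies near the origin, translates the whole path by $\mathbf{m}=\left(\frac{1}{2},\dots,\frac{1}{2}\right)$ so that it sits inside $[0,1]^n$, and then invokes Theorem 6.2 and Remark 6.3 of Steel--Taylor to join each original point to its rescaled, recentered image by straight-line segments remaining in $\GST_n$. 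None of this appears in your proposal, and without some version of it the lemma as stated is not proved.

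The portion of your argument that concerns $W$ itself is essentially sound, and in places tighter than the paper's: the radial path $\tau\mapsto\bigl((1-\tau)+\tau\sqrt{t'/t}\bigr)\by$ correctly exploits that $V$ is a cone and that $T$ is a linear subspace, and it silently corrects the paper's slip of taking $c_1=t/M_p$ where $\sqrt{t/M_p}$ is needed for a quadratic form. Note, however, that your step (ii) is vacuous rather than delicate: a point with $A(\by)=B(\by)=0$ has every coordinate other than $y_1$ equal to zero, hence lies in $T$ (the paper records this as $S_{s,0}\subseteq T$), so $W_0=\emptyset$ and no closure or perturbation argument is required. These observations do not repair the missing probability-space step, which is the actual content of the lemma; to complete the proof you would need to add the small-$t$ rescaling, the translation by $\mathbf{m}$, and a justification (such as the cited straight-line-path results) that each original point can be connected within $\GST_n$ to its rescaled, translated image.
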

\begin{proof} 
Let $\mathbf{m}=\left(\frac{1}{2},\frac{1}{2},\hdots,\frac{1}{2}\right)$. Let $s,t'>0$ be sufficiently small so that $\mathbf{m}+P\bx\in [0,1]^n$ for all $\bx\in S_{s,t'}$. 
Consider any points $\mathbf{p}$ and $\mathbf{q}\in\GST_n$.
Recall that for any point $\bx$ in $ \GST_n$, $\by=P^T \bx$ satisfies
\[\sum_{i=2}^k\lambda_i y_i^2=M \text{ and } \sum_{i=n-k+1}^n (-\lambda_i) y_i^2=M,\text{ for some }M. \]
 Now, suppose that for some $\by$, $M = 0$, then $\by$ must be of the form $\by=(y_1,0,0,\hdots,0)$ with $P\by = (1,1,1,\hdots,1)$. Such $\by$, however, are in $T$ and have $P\by \in \Inf_n^c$, thus failing the influence condition. Therefore, $\by\in(\cup S_{s,t}) - T$ requires $M>0$.
Denote the $M$ corresponding to $\mathbf{p}$ and $\mathbf{q}$ as $M_p$ and $M_q$ respectively.
Since $M_p>0$ and $M_q>0$, we can choose $c_1=\frac{t}{M_p}$ and $c_2=\frac{t}{M_q}$ for some $t\in(0,t']$.
Then for $c_1\by_p=c_1P^T\mathbf{p}$ and $c_2\by_q=c_2P^T\mathbf{q}$, we have $ c_1\by_p, c_2\by_q\in S_{s,t}$. 
 Since $S_{s,t}-T$ is connected for all $t>0$, there exists a path from $\by_p$ to $\by_q$ in $S_{s,t}-T$. By the fact that $P^T$ is a homeomorphism, there also exists a continuous path from $P\by_p$ to $P\by_q$ satisfying $\Ind_n$ and $\Inf_n$, but not necessarily within probability space, $[0,1]^n$. In order to ensure that there is a path in probability space as well, we scale the path from $P\by_p$ to $P\by_q$ by adding $\mathbf{m}=\left(\frac{1}{2},\frac{1}{2},\hdots, \frac{1}{2}\right)$ to the entire path. 
For small enough $t$ and $t'$, this path from $\mathbf{m}+P\by_p$ to $\mathbf{m}+P\by_q$ remains in $[0,1]^n$ and hence is in $\GST_n$.
 Note that $P\by_p=P(c_1P^T\mathbf{p})$. 
Also, using Theorem 6.2 and Remark 6.3 from \cite{taylor}, we know that the straight-line paths from
$P\by_p$ to $\mathbf{m}+P\by_p$ and $P\by_q$ to $\mathbf{m}+P\by_q$,
remain in $[0,1]^n$ and are in $\GST_n$ as well.
Therefore, if $S_{s,t}-T$ is connected for all $t>0$, then $\GST_n$ is connected also.
\end{proof}

In the previous Lemma, we were only concerned with $S_{s,t}$ for $t>0$. We need not consider $t =0$, since for all $s$, $S_{s,0}\subset T$, meaning none of the corresponding $\bx$ satisfy influence and are therefore not in $\GST_n$. 
In the following sections, we show that for $n>4$, $S_{s,0}\subset T$ does not disconnect $\underset{s,t\geq0}\cup S_{s,t}$ when removed.

\section{$\GST_4$}
We first examine the space $\Ind_4\cap \Inf_4^c$ by setting $\Psi(\bx)=0,\, x_1=x_4$ and $x_2=x_3.$ Beginning with $\Psi$ for $n=4$,

\begin{align*}
&\Psi(\bx)
=\left(\frac{1}{2^{3}}\sum_{k=0}^{3}{3 \choose k}x_{k+1}\right)^2-\frac{1}{2^{3}}\sum_{k=0}^{2}{2 \choose k}(x_{k+2}^2+x_{k+1}x_{4-(k+1)}).\\
\end{align*}
\begin{align*}
&\text{ Applying } x_1=x_4\text{ and }x_2=x_3,\\
&\Psi(\bx)=\frac{1}{64}\left(2x_{1}+6x_{2}\right)^2-\frac{1}{8}\left(x_{2}^2+2x_{1}x_{3}+2x_{3}^2+2x_{2}^2+x_{4}^2\right).\\
&\text{Since $\Psi(x)=0$, we multiply through by 8 and find},\\
&0=\frac{1}{2}x_{1}^2+3x_1x_2+\frac{9}{2}x_{2}^2-\left(5x_{2}^2+2x_{1}x_{2}+x_{1}^2\right)\\
&\quad=-\frac{1}{2}x_{1}^2+x_1x_2-\frac{1}{2}x_{2}^2\\
&\quad=-\frac{1}{2}(x_{1}-x_{2})^2.\\
\end{align*}
Since $-\frac{1}{2}(x_{1}^2-x_{2}^2)^2\leq 0$ for all real $x_1,x_2$, the only solution to this system requires that $x_1=x_2$. Therefore, if $\bx\in \Ind_4\cap \Inf_4^c$, then 
 $x_1=x_2=x_3=x_4$. 
Hence $\Ind_4\cap \Inf_4^c$ forms a one-dimensional linear space with basis vector $(1,1,1,1)$. The result of applying $P^T$ to this space gives $S_{s,t}\cap T$, a one-dimensional space with basis $(1,0,0,0)$. 
We now explore the connectedness of 
$S_{s,t}-T$ and consequently the connectedness of $\GST_4$.

Recall the definition of $S_{s,t}$ given in Equation 4.
For $n=4,$
\[S_{s,t}=\{\by\in\mathbb{R}^n|-s<y_1<s,\lambda_2 y_2^2=t \text{ and } (-\lambda_3) y_3^2+(-\lambda_4)y_4^2=t \}.\]
Since any $\by\in T\cap \left( \cup S_{s,t}\right)$ is of the form $(y_1,0,0,0)$, then $S_{s,0} = T\cap \left( \cup S_{s,t}\right)$ and $T\cap S_{s,t>0}=\emptyset$. Therefore, when examining connectedness of the transformed $\GST_4$ space, $ S_{s,t>0}-T=S_{s,t>0}$ and we need only consider the connectedness of the $S_{s,t>0}$. For any $\by\in S_{s,t>0}$, $y_2=\pm\sqrt{\frac{t}{\lambda_2}}$, and $S_{s,t>0}=I_s\times S^0\times S^1$ is a two-dimensional space.
If we consider any $\mathbf{p}\in \cup S_{s,t>0}$, where $p_2<0$ and any $\mathbf{q}\in \cup S_{s,t>0}$, where $q_2>0$, then there does not exist a continuous path from $\mathbf{p}$ to $\mathbf{q}$ in $S_{s,t>0}$ by the Intermediate Value Theorem, which implies that such a path must contain a point $\by$ with  $y_2=0$.
However, $y_2 = 0$ implies $\by \in S_{s,t = 0}=T$ and $\by \not\in S_{s,t>0}$. Thus $\GST_4$ consists of at least two components. Observe that each $S_{s,t>0}$ forms two disjoint cylinders of the form $I_s\times\sqrt{\frac{t}{\lambda_2}}\times S^1$ and $I_s\times-\sqrt{\frac{t}{\lambda_2}}\times S^1$ \cite{taylor}. Each cylinder forms a connected space. The two cylinders are disconnected by $(y_1,0,0,0)$.
Let $A_1$ denote $I_s\times \sqrt{\frac{t}{\lambda_2}}\times S^1$ and $A_2$ denote $I_s\times -\sqrt{\frac{t}{\lambda_2}}\times S^1$. 
Then each $S_{s,t}\cap A_1$ and $S_{s,t}\cap A_2$ is connected. The argument used in Lemma 2.2 now implies that $\underset{s,t}\cup A_1$ and $\underset{s,t}\cup A_2$ are each connected and therefore $\GST_4$ consists of two connected components.

\section{The connectedness of $\GST_5$}
If we set $\Psi(\bx)=0$, include the conditions that $x_1=x_5, x_2=x_4$ and simplify, we see \begin{align*}
\Psi(\mathbf{x})&=\Bigg(\frac{1}{16}\sum_{k=0}^4 {4\choose k}x_{k+1}\Bigg)^2-\frac{1}{16}\sum_{k=0}^3{3\choose k}(x_{k+2}^2+x_{k+1}x_{n-(k+1)})\\
&=\frac{1}{2^8}[2x_{1}+8x_{2}+6x_{3}]^2-\frac{1}{16}\Big[(x_{2}^2+x_{1}x_{2})+3(x_{3}^2+x_{2}x_{3})+3(x_{2}^2+x_{3}x_{2})+(x_{1}^2+x_{2}x_{1})\Big]\\
&=0.
\end{align*}
We mulitiply through by 64 and find,
\begin{align*}
0&=[x_{1}^2+16x_{2}^2+9x_{3}^2+8x_1x_2+6x_1x_3+24x_2x_3]-
4\Big[4x_{2}^2+2x_{1}x_{2}+3x_{3}^2+6x_{2}x_{3}+x_{1}^2\Big]\\
&=-3x_{1}^2-3x_{3}^2+6x_1x_3\\
&=-3(x_{1}-x_{3})^2.\\
\end{align*}
Again, the only solution to this equation occurs when $x_1=x_3$.
Thus, if $\bx \in\Ind_5\cap \Inf_5^c$ then $x_1=x_3=x_5$ and $x_2=x_4$. This intersection forms a two-dimensional linear space that can be written in terms of $x_1$ and $x_2$. Applying the matrix $P^T$ to this space also results in a two-dimensional space with basis vectors $\mathbf{b_1}=(1,0,0,0,0)$ and $\mathbf{b_2}=(0,0.686556,-0.606862,0.519196,-0.301323)$. 
Therefore any $\by\in\left(\cup S_{s,t}\right)\cap T$ (defined in Equations 5 and 6) is a linear combination of these two basis vectors and we write
\[\left(\cup S_{s,t}\right)\cap T=\{\by\in\mathbb{R}^5|\by=\mathbf{b_1}a+\mathbf{b_2}c \text{ for }a,c\in \mathbb{R}\}.\]
Since $\mathbf{b_1}=(1,0,0,0,0)$, we can equivalently write
\[\left(\cup S_{s,t}\right)\cap T=\{\by\in\mathbb{R}^5|\by=(y_1,b_{22}\cdot c,b_{23}\cdot c,b_{24}\cdot c, b_{25}\cdot c),c\in \mathbb{R}\}.\]

Recall that by Lemma 2.2, if $S_{s,t>0}-T$ is connected for each $s, t>0$, then $\cup S_{s,t>0}-T$ is connected also. Fix an $s$ and $t > 0$. Then for $\by\in S_{s,t}$, 
\[\lambda_2 y_2^2+\lambda_3 y_3^2=t = \lambda_4 y_4^2+\lambda_5 y_5^2.\]
Choosing the first of the two equivalent equations for $t$, we substitute $cb_{22}$ in for $y_2$ and $cb_{23}$ in for $y_3$ and get
\begin{equation}
t=\lambda_2(b_{22}\cdot c)^2+\lambda_3(b_{23}\cdot c)^2.
\end{equation}
Solving for $c$,
\begin{equation}
c=\pm \sqrt{\frac{t}{\lambda_2b_{22}^2+\lambda_3 b_{23}^2}}.
\end{equation}
We can now characterize $T\cap S_{s,t}$ as line segments of the form, 
\[T\cap S_{s,t>0}=\{\by\in\mathbb{R}^5|\by=(y_1,\pm c\cdot b_{22},\pm c\cdot b_{23},\pm c\cdot b_{24},\pm c\cdot b_{25}), -s<y_1<s\}.\]

Each $S_{s,t>0}$ is homeomorphic to $I_s\times S^1\times S^1$ \cite{taylor}, a three-dimensional path-connected space. We note that the two $S^1$ are circles based on the restrictions that $\lambda_2 y_2^2+\lambda_3 y_3^2=t \text{ and } \lambda_4 y_4^2+\lambda_5 y_5^2=t.$ We now show that $S_{s,t>0}-T$ is a path-connected space as well.

We observe that if $\mathbf{p}$ is any point in $S_{s,t>0}-T$
then for some $p_i$ with $2\leq i\leq 5$, $p_i \neq \pm cb_{2i}$ (and
hence $p_j\neq \pm cb_{2j}$), where $i\neq j$ and $i,j\in\{2,3\}$ or $i,j\in\{4,5\}$. Let
$\mathbf{q}$ be any point in $S_{s,t>0}$ such that $q_i = p_i$ (and
hence $q_j = p_j$).  Then
$\mathbf{q}\in S_{s,t>0}-T$ and, since $S^1$ is path-connected, there is a path in $S_{s,t>0}$ from
$\mathbf{p}$ to $\mathbf{q}$ where every point on the path has $p_i$
as the $i^{th}$ coordinate. Hence the entire path is actually in
$S_{s,t>0}-T$.   

Now let $\mathbf{p}$ and $\mathbf{q}$ be any two points in
$S_{s,t>0}-T$.  As before, for some $2\leq i\leq 5$, $p_i \neq \pm
cb_{2i}$. Without loss of generallity suppose 
$p_2\neq \pm cb_{22}$.  We split the argument into two cases, one in
which $q_4 = \pm cb_{24}$ and $q_5=\pm cb_{25}$ and the second with
$q_4 \neq \pm cb_{24}$ and $q_5\neq \pm cb_{25}$.  

If $q_4 = \pm cb_{24}$ and $q_5=\pm cb_{25}$, then $q_2\neq \pm cb_{22}$ and $q_3\neq \pm cb_{23}$.  
Since $S^1$ is continuous and $S_{s,t>0}\cap T$ is discrete, there
exist $s_4, s_5$ such that $\lambda_4s_4^2+ \lambda_5s_5^2 = t$ and  $s_4
\neq \pm cb_{24}$ and $s_5\neq \pm cb_{25}$.  Then, by the argument
above, there is a path in $S_{s,t>0}-T$ from 
$\mathbf{p}$ to $(p_1, p_2, p_3, s_4, s_5)$ and  from 
$(p_1, p_2, p_3, s_4, s_5)$ to $(q_1, q_2, q_3, s_4, s_5)$.  
Since $q_2\neq \pm cb_{22}$, there exists a path
from $(q_1, q_2, q_3, s_4, s_5)$ to $\mathbf{q}$ in $S_{s,t>0}-T$.  These paths combine
to give a path from $\mathbf{p}$ to $\mathbf{q}$ in $S_{s,t>0}-T$.

In the second case, $q_4 \neq \pm cb_{24}$ and 
$q_5\neq \pm cb_{25}$. Using the argument above, there is a path from 
$\mathbf{p}$ to $(p_1, p_2, p_3, q_4, q_5)$ and a path from 
$(p_1, p_2, p_3, q_4, q_5)$ to $\mathbf{q}$ both of which are in
$S_{s,t>0}-T$. 

We have now shown that $S_{s,t}-T$ is path connected since there exists a path between any $\mathbf{p}$ and $\mathbf{q}$ in $S_{s,t}-T$.
 Then $\GST_5$ is connected by Lemma 2.2.

\section{The connectedness of $\GST_6$}
If we set $\Psi(\bx)=0$, include the conditions that $x_1=x_6, x_2=x_5,x_3=x_4$ and simplify we find
\begin{align*}
&\Psi(\mathbf{x})=\Bigg(\frac{1}{2^5}\sum_{k=0}^5 {5\choose k}x_{k+1}\Bigg)^2-\frac{1}{2^5}\sum_{k=0}^4{4\choose k}(x_{k+2}^2+x_{k+1}x_{n-(k+1)})\\
&=\frac{1}{2^{10}}\Big[x_{1}+5x_{2}+10x_{3}+10x_{3}+5x_{2}+x_{1}\Big]^2-\\
&\quad-\frac{1}{2^5}\Big[(x_{2}^2+x_{1}x_{2})+4(x_{3}^2+x_{2}x_{3})+6(x_{3}^2+x_{3}x_{3})+4(x_{2}^2+x_{3}x_{2})+(x_{1}^2+x_{2}x_{1})\Big].\\
\end{align*}
We mulitiply through by $2^8$,
\begin{align*}
0&=x_{1}^2+25x_{2}^2+100x_{3}^2+10x_1x_2+20x_1x_3+100x_2x_3-
8\Big[x_1^2+ 5x_{2}^2+2x_{1}x_{2}+16x_{3}^2+8x_{2}x_{3}\Big]\\
&=-7x_{1}^2-15x_{2}^2-28x_{3}^2-6x_1x_2+20x_1x_3+36x_2x_3.\\
\end{align*}
Solving for $x_1$, we find
\begin{align*}
x_1&=\frac{1}{7}\bigg(-3x_2+10x_3\pm\sqrt{-(x_2-x_3)^2}\bigg).
\end{align*}
The only real solution to this equation occurs when $x_2=x_3$. Making this substitution,
\begin{align*}
x_1&=\frac{1}{7}\big(-3x_2+10x_2\big)=x_2.
\end{align*}
Now, all real solutions must have $x_2=x_3$, $x_1=x_2$ and therefore $x_1=x_2=x_3=x_4=x_5=x_6$. 
Thus, $\Ind_n\cap \Inf_6^c$ forms a one-dimensional linear space with basis vector $(1,1,1,1,1,1)$. The result of applying $P^T$ to this space gives the one-dimensional space with basis $(1,0,0,0,0,0)$. Then for $n=6$, \[T=\{\by|\by=(y_1,0,0,0,0,0)\}.\] 
Now, $T\subseteq S_{s,0}$ since $\lambda_2 y_2^2+\lambda_3 y_3^2=0 \text{ and } \lambda_4 y_4^2+\lambda_5 y_5^2+\lambda_6 y_6^2=0$ for all $\by\in T$. Thus, for all $s,t>0$, $ S_{s,t}-T =  S_{s,t}$ when $n=6$. 
Now $S_{s,t}$ is homeomorphic to $I_s\times S^1\times S^2$, where $I_s$ is the interval corresponding to $s$, $S_1$ is a circle and $S^2$ is a sphere. Each of these spaces are connected, and the product of these spaces, $S_{s,t}$ is connected as well. 
Then by Lemma 2.2, since each of the $S_{s,t}$ are connected for $t>0$, $\GST_6$ is connected as well.

 \section{The connectedness of $\GST_7$}
If we set $\Psi(\bx)=0$, include the condition that $x_1=x_7, x_2=x_6,x_3=x_5$ and simplify we find
\begin{align*}
&\Psi(\mathbf{x})=\Bigg(\frac{1}{2^6}\sum_{k=0}^6 {6\choose k}x_{k+1}\Bigg)^2-\frac{1}{2^6}\sum_{k=0}^5{5\choose k}(x_{k+2}^2+x_{k+1}x_{n-(k+1)})\\
&=\frac{1}{2^{12}}\Bigg[2x_{1}+12x_{2}+30x_{3}+20x_{4}\Bigg]^2-\frac{1}{2^6}\Bigg[x_1^2+6x_{2}^2+15x_{3}^2+10x_{4}^2+2x_{1}x_{6}+10x_{2}x_{5}+20x_{4}x_{3}\Bigg]\\&=0.
\end{align*}
Mulitiplying through by $2^{10}$ and simplifying,
\begin{align*}
&-15x_{1}^2-60x_{2}^2-15x_{3}^2-60x_{4}^2-20x_1x_2+30x_1x_3+20x_1x_4+20x_2x_3+120x_2x_4-20x_3x_4=0\\
\end{align*}
Solving for $x_4$, we find,
\[x_4=\frac{1}{6}\bigg(x_1+6x_2-x_3-2\sqrt{2}\sqrt{-(x_1-x_3)^2}\bigg).\]
Thus, all real solutions must have $x_1=x_3$.
Hence $\bx\in \Ind_7\cap \Inf_7^c$, implies 
$x_1=x_3=x_5=x_7$ and $x_2=x_4=x_6$, which forms a two-dimensional linear space. Applying $P^T$ transform, we find that $T$ is a two-dimensional linear space with basis vectors $\mathbf{b_1}=(1,0,0,0,0,0)$ and $\mathbf{b_2}=(0,-0.6902,0.6635,-0.5705,0.5168,-0.3974,0.2172)$.
Fix $s,t>0$. As in the $n=5$ case, we take one of the two equivalient expressions for $t$ and  substitute $\mathbf{b_2}$ in for $\by$ to get, 
\begin{equation}
t=\lambda_2(b_{22}\cdot c)^2+\lambda_3(b_{23}\cdot c)^2+\lambda_4(b_{24}\cdot c)^2.
\end{equation}
We now characterize $c$ in terms of $t$,
\begin{equation}
c=\pm \sqrt{\frac{t}{\lambda_2b_{22}^2+\lambda_3 b_{23}^2+\lambda_4 b_{24}^2}}.
\end{equation}
As in the $n = 5$ case, we can represent $T\cap S_{s,t}$ as the union of lines of the form, 
\[T\cap S_{s,t>0}=\{\by\in\mathbb{R}^5|\by=(y_1,\pm c\cdot b_{22},\pm c\cdot b_{23},\pm c\cdot b_{24},\pm c\cdot b_{25},\pm c\cdot b_{26},\pm c\cdot b_{27}), -s<y_1<s\}.\]
When $n=7$, $S_{s,t}$ forms a five dimensional space homeomorphic to $I_s\times S^2\times S^2$. We now show that the removal of $T$ cannot disconnect these space using a proof similar to that used for $\GST_5$.

We observe that if $\mathbf{p}$ is any point in $S_{s,t>0}-T$
then for some $p_i$ with $2\leq i\leq 7$, $p_i \neq \pm cb_{2i}$, where $i,j,k\in\{2, 3, 4\}$ or $i,j,k\in\{5,6,7\}$ and $i\neq j\neq k$. Let
$\mathbf{q}$ be any point in $S_{s,t>0}$ such that $q_i = p_i$, $q_j = p_j$, and $q_k = p_k$.  Then
$\mathbf{q}\in S_{s,t>0}-T$ as well. Since $S^2$ is path-connected, there is a path in $S_{s,t>0}$ from
$\mathbf{p}$ to $\mathbf{q}$ where every point on the path has $p_i$ as the $i$th coordinate. Hence the entire path is in
$S_{s,t>0}-T$.   

Now let $\mathbf{p}$ and $\mathbf{q}$ be any two points in
$S_{s,t>0}-T$.  As before, for some $2\leq i\leq 7$, $p_i \neq \pm
cb_{2i}$. Without loss of generallity suppose 
$p_2\neq \pm cb_{22}$.  We split the argument into two cases, one in
which $q_5 = \pm cb_{25}$, $q_6 = \pm cb_{26}$ and $q_7=\pm cb_{27}$ and the second with
$q_5 \neq \pm cb_{25}$, $q_6 \neq \pm cb_{26}$ or $q_7\neq \pm cb_{27}$.  

If $q_5 = \pm cb_{25}$, $q_6 = \pm cb_{26}$ and $q_7=\pm cb_{27}$ , then $q_2\neq \pm cb_{22}$, $q_3\neq \pm cb_{23}$ or $q_4\neq \pm cb_{24}$. Without loss of generality, suppose $q_2\neq \pm cb_{22}$.
Since $S^2$ is continuous and $S_{s,t>0}\cap T$ is discrete, there
exist $s_5, s_6,s_7$ such that $\lambda_5s_5^2+ \lambda_6s_6^2+ \lambda_7s_7^2 = t$ and  $s_5 \neq \pm cb_{25}$, $s_6 \neq \pm cb_{26}$ or $s_7\neq \pm cb_{27}$.  Then, by the argument
above, there is a path in $S_{s,t>0}-T$ from 
$\mathbf{p}$ to $(p_1, p_2, p_3, p_4, s_5, s_6, s_7)$ and  from 
$(p_1, p_2, p_3, p_4, s_5, s_6, s_7)$ to $(q_1, q_2, q_3, q_4, s_5, s_6, s_7)$.  
Since $q_2\neq \pm cb_{22}$, there exists a path
from $(q_1, q_2, q_3, q_4, s_5, s_6, s_7)$ to $\mathbf{q}$.  These paths combine
to give a path from $\mathbf{p}$ to $\mathbf{q}$ in $S_{s,t>0}-T$.

In the second case, $q_5 \neq \pm cb_{25}$, $q_6 \neq \pm cb_{26}$ or $q_7\neq \pm cb_{27}$. Using the argument above, there is a path from 
$\mathbf{p}$ to $(p_1, p_2, p_3, p_4, q_5, q_6, q_7)$ and a path from 
$(p_1, p_2, p_3, p_4, q_5, q_6, q_7)$ to $\mathbf{q}$ both of which are in
$S_{s,t>0}-T$.

We have now shown that $S_{s,t}-T$ is path connected since there exists a path between any $\mathbf{p}$ and $\mathbf{q}$ in $S_{s,t}-T$. Then $\GST_7$ is connected by Lemma 2.2.

\section{Applications of this Model}
In \cite{taylor}, Steel and Taylor present this topic in the context of a biological model, in addition to the context of a drinking game. In the biological example, the probability of the success of pollination for a single flower is dependent upon the time at which it blooms as compared to the time at which neighboring flowers bloom. In this section we offer several additional scenarios for which this model might be useful.

The first scenario we present addresses cheater organisms within populations. Plants and animals put energy toward building defenses against predators and diseases. Genetic mutations that lead to a lack of or diminished defenses can be advantageous to individuals, called ``cheaters", as long as enough of the surrounding organisms display defenses in order to prevent an epidemic or deter predators. In this scenario, a cause $C_i$ for an individual can be thought of as either building defenses toward a given threat, or having the ``cheater" genotype. The corresponding effect $E_i$ is 0 if individual $i$ has a less advantageous genotype and 1 if individual $i$ exhibits a more advantageous genotype. The probability of the effect $E_i$ then depends on the number of individuals with the same genotype as individual $i$.

A second scenario relates employee productivity to work location. Suppose that an $n$-person team works in a division of Company X. Team members can choose between working collaboratively in a large room, or individually in their offices. Depending on the number of people who choose to work in the large room, it can be a more or less productive environment than working in the offices. With an ideal number of people in the large room, team members work collaboratively and help each other stay focused. Too many people in the large room, however, can lead to distraction and poor time management. The cause, $C_i$, in this scenario is 0 if person $i$ chooses to work in his office and 1 if person $i$ chooses to work in the large room on a given day. The effect $E_j$ is 0 or 1 based on whether person $j$ completes her work for the day.

A third example models the commute to work for a group of people. Suppose $n$ people have similar daily commutes. Each day, each of the $n$ people choose whether to drive or take public transportation to work. If a large portion of the $n$ people choose to drive, then the traffic is much slower, and the bus, which travels in its own lane, is a faster option. If a large number of people choose to take the bus, however, the bus must make more stops and takes more time at each stop, making driving a faster choice.
In this scenario, person $i$'s choice whether to drive or take public transportation represents the cause $C_i$. 
An effect, $E_i$, is 0 if person $i$ does not arrive to work on time and 1 if person $i$ does arrive on time. Thus, the probability of the effect is determined by the number of people that choose the same method of transportation as person $i$.

\end{document}